\theoremstyle{plain}  
\newtheorem{theorem}{Theorem}[section]
\newtheorem*{theorem*}{Theorem}
\newtheorem{proposition}[theorem]{Proposition}
\theoremstyle{definition}
\theoremstyle{remark}
\newtheorem{remark}[theorem]{Remark}
\newtheorem*{remark*}{Remark}
\newtheorem*{claim*}{Claim}
\numberwithin{equation}{section}
\newcommand{\suchthat}{\;\;|\;\;}
\newcommand{\abs}[1]{\lvert#1\rvert}
\renewcommand{\leq}{\leqslant}
\renewcommand{\geq}{\geqslant}
\renewcommand{\setminus}{\smallsetminus}
\newcommand{\into}{\hookrightarrow}
\newcommand{\RR}{\mathbb{R}}
\newcommand{\ZZ}{\mathbb{Z}}
\newcommand{\CC}{\mathbb{C}}
\newcommand{\EE}{\mathbb{E}}
\newcommand{\HH}{\mathbb{H}}
\newcommand{\PSL}{\mathrm{PSL}}
\newcommand{\SU}{\mathrm{SU}}
\newcommand{\SL}{\mathrm{SL}}
\newcommand{\tSL}{\widetilde{\mathrm{SL}}}
\newcommand{\SO}{\mathrm{SO}}
\newcommand{\Sp}{\mathrm{Sp}}
\DeclareMathOperator{\Hom}{Hom}
\DeclareMathOperator{\Isom}{Isom}
\newcommand{\spmat}[4]{
  \left(
    \begin{smallmatrix}
      #1 & #2 \\
      #3 & #4
    \end{smallmatrix}
  \right)
}
\begin{document}

\title{Geometry on Surfaces and Higgs bundles}
\author{Peter B. Gothen}
\email{pbgothen@fc.up.pt}

\date{5 October, 2023}
\address{Centro de Matemática da Universidade do Porto and Departamento de Matemática, Faculdade de Ciências da Universidade do Porto, Portugal}

\thanks{Partially supported by CMUP under the projects UIDB/00144/2020,
  UIDP/00144/2020, and the project EXPL/MAT-PUR/1162/2021 funded by FCT (Portugal)
  with national funds.}

\begin{abstract}
There are three complete plane geometries of constant curvature:
spherical, Euclidean and hyperbolic geometry. We
explain how a closed oriented surface can carry a geometry which
locally looks like one of these. Focussing on the hyperbolic case we
describe how to obtain all hyperbolic structures on a given
topological surface, and how to parametrise them. Finally we introduce
Higgs bundles and explain how they relate to hyperbolic surfaces.
\end{abstract}

\maketitle

\begin{multicols}{2}
\section{Introduction}

The idea of considering geometry on a surface is well known to
inhabitants of Planet Earth. Indeed, as any explorer knows, spherical
geometry is appropriate. In this geometry distance is measured along
arcs of great circles. These are the \emph{geodesics} of spherical
geometry, just like the geodesics of plane Euclidean geometry are
segments of straight lines.

The spherical surface and the Euclidean plane are both
\emph{complete}, meaning that any geodesic can be extended
indefinitely. Moreover they both have constant curvature,  positive in the
case of the sphere, and zero  in the case of the plane.
There is also a complete $2$-dimensional geometry of
constant negative curvature, namely the hyperbolic plane (which we
shall introduce below).

The sphere is an example of a \emph{closed surface}, i.e., a surface
which is compact and has no boundary (as opposed to a closed disk, for
example). Topologically, closed orientable surfaces are classified by
the genus $g$, a non-negative integer: a surface of genus $g$ can be
realised inside $3$-space as a ``$g$-holed torus''  as illustrated in
Figure~\ref{fig:surfaces}.
 We have seen that the genus zero surface supports spherical geometry
but what about the other surfaces? Our first main goal in this article
is to explain how the torus (genus one) supports a complete geometry
which locally looks like the Euclidean plane, while a surface of genus
$g\geq 2$ can be given a complete locally hyperbolic geometry. This
involves considering certain special subgroups of the matrix group
$\SL(2,\RR)$.  We shall then  see how the algebra and geometry of the matrix
group $\SL(2,\RR)$ interact in interesting ways, and how this sheds
light on the question of which subgroups give rise to hyperbolic  surfaces.
\begin{center}
  \includegraphics[width=0.9\linewidth]{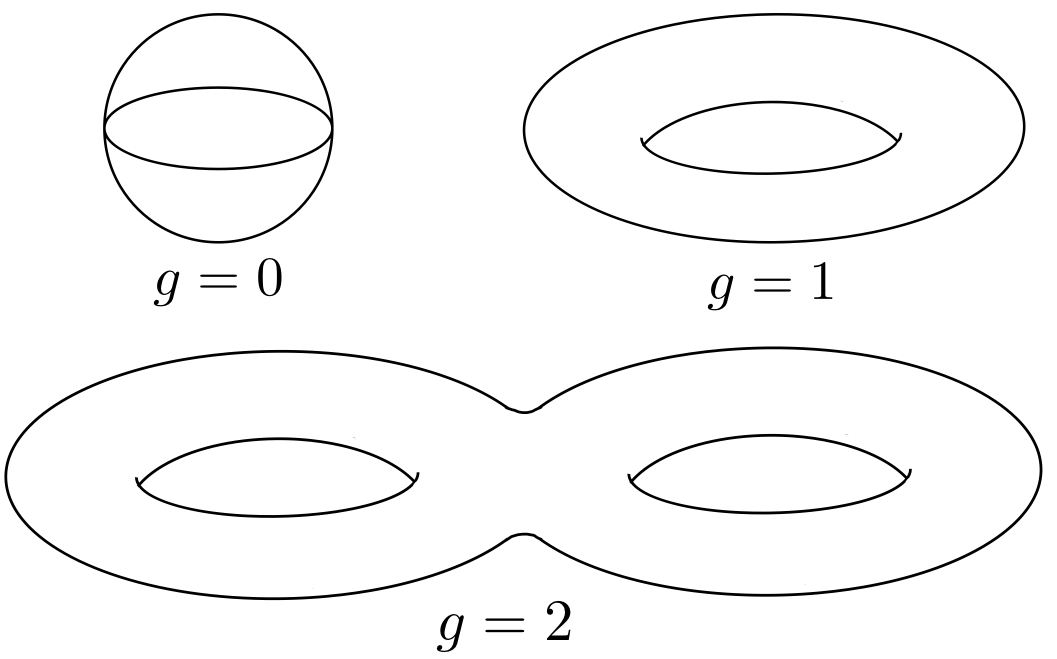}
  \captionof{figure}{Genus of a surface}
   \label{fig:surfaces}
\end{center}

Our second main goal is to explain how considering the set of all possible
hyperbolic structures on a fixed topological surface of genus
$g\geq 2$ leads to interesting and beautiful mathematics. Thus we
introduce moduli spaces and explore some of their properties.

Finally, we shall give an introduction to Higgs bundles. We shall show
how they can be used to shed new light on the theme of hyperbolic 
structures on surfaces and indicate their role in recent
generalisations of some of the results explained earlier in the
article.

The paper is mostly expository, only the final
Section~\ref{sec:cayley} includes some results in which the author has
been involved.

For reasons of space the references are by no means complete, but we
hope the interested reader will be able to use them as a starting
point for further exploration.

\section{Euclidean surfaces}

We want to explain how to do Euclidean geometry on a closed surface,
in a way which makes the generalisation to the hyperbolic case natural.

\subsection{The Euclidean plane}
\label{sec:euclidean-plan}
Using Cartesian coordinates we identify the Euclidean plane $\EE^2$
with the coordinate plane  $\RR^2$. Distance is determined  by calculating the length of a
parametrised curve $\alpha\colon[a,b]\to \RR^2$ in the usual way:
$l(\alpha) = \int_a^b\abs{\alpha'(t)} dt$. This is usually expressed
by saying that in Cartesian coordinates $(x,y)\in\RR^2$ on $\EE^2$ the
Euclidean element  of arc length $ds$ is given by
\begin{equation}
  \label{length}
  ds^2 = dx^2 + dy^2.
\end{equation}
Moreover, the distance preserving transformations form
a group, $\Isom(\EE^2)$, which is called the isometry group of
$\EE^2$. An example of isometries are \emph{translations}.
Using coordinates $\EE^2\cong\RR^2$, the translation
$A\colon\EE^2\to\EE^2$ by the vector $\mathbf{a}\in\RR^2$ can be written
\begin{displaymath}
  A(P) = P+ \mathbf{a}.
\end{displaymath}

\subsection{Euclidean surfaces}
\label{sec:euclidean-surfaces}
The Euclidean plane $\EE^2$ is obviously not a closed
surface. However, we can build a closed surface by taking a parallellogram in
the plane and gluing its opposite sides, as illustrated in
Figure~\ref{fig:torus}: sides labeled with the same letter are to be
glued with the orientation indicated by the arrows.
We can carry out this process in $3$-space --- in a non-distance preserving way! ---
to convince ourselves that
the resulting surface is really a topological torus.
\begin{center}
  \includegraphics[width=0.9\linewidth]{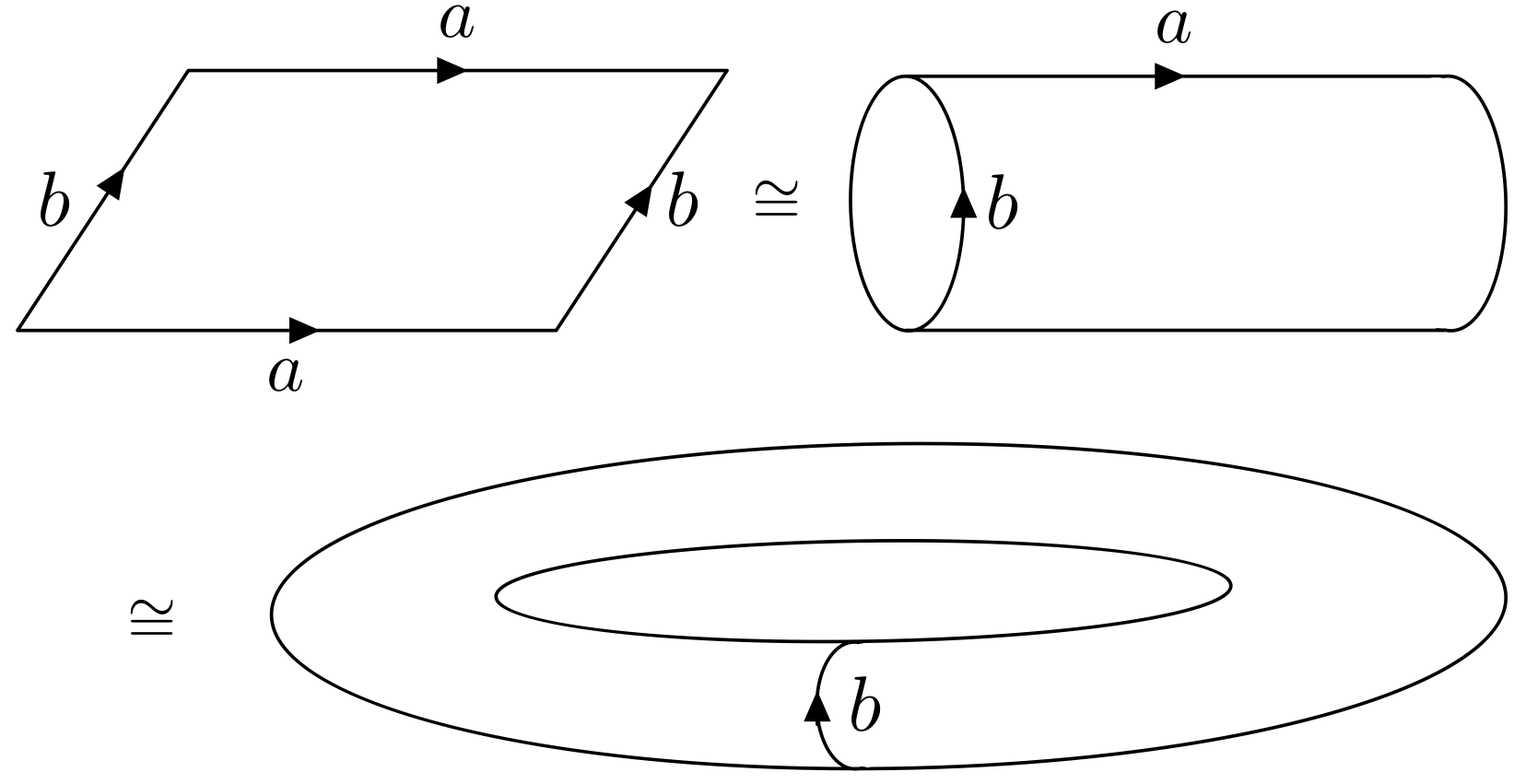}
  \captionof{figure}{A Euclidean surface}
  \label{fig:torus}
\end{center}

More formally, we take linearly independent vectors $\mathbf{a}$ and
$\mathbf{b}$ generating the sides labeled $a$ and $b$. Let $A$ and
$B$ be the translations by the vectors $\mathbf{a}$ and $\mathbf{b}$,
respectively, and consider the subgroup
\begin{displaymath}
 \Gamma = \langle A,B \rangle \subseteq \mathrm{Isom}(\EE^2)
\end{displaymath}
generated by them inside the isometry group of $\EE^2$. This is just
the group of translations by vectors of the form
$n\mathbf{a}+m\mathbf{b}$, where $m,n\in\ZZ$. Since translations
commute, the generators of $\Gamma$ satisfy the single relation
\begin{displaymath}
  [A,B] = I,
\end{displaymath}
where $[A,B] := ABA^{-1}B^{-1}$ is the
\emph{commutator} and $I$ is the identity.
The \emph{orbit space}
\begin{displaymath}
  \EE^2 / \Gamma
\end{displaymath}
is obtained identifying points $P,Q\in\EE^2$ if there is a
$\gamma\in\Gamma$ such that $Q=\gamma(P)$. Its points correspond to \emph{orbits} $\Gamma\cdot Q = \{\gamma(Q)\suchthat\gamma\in\Gamma\}$.
Thus each point of the
interior of the paralellogram generated by $\mathbf{a}$ and
$\mathbf{b}$ corresponds to a unique point of $\EE^2 / \Gamma$, and
pairs of points on opposite sides are identified via the corresponding
translation, thus realising the desired gluing. Hence
$\EE^2 / \Gamma$ is a locally Euclidean surface, which is
topologically a torus.

As illustrated in Figure~\ref{fig:vertex} the four vertices of the
parallelogram $P$ get identified in $\EE^2 /
\Gamma$. Moreover, there are four copies of the parallelogram meeting there, which
fit together because of the relation $[A,B] = I$; the Euclidean
metric is not distorted because the sum of the internal angles of the
parallelogram is exactly $2\pi$.
\begin{center}
  \includegraphics[width=0.9\linewidth]{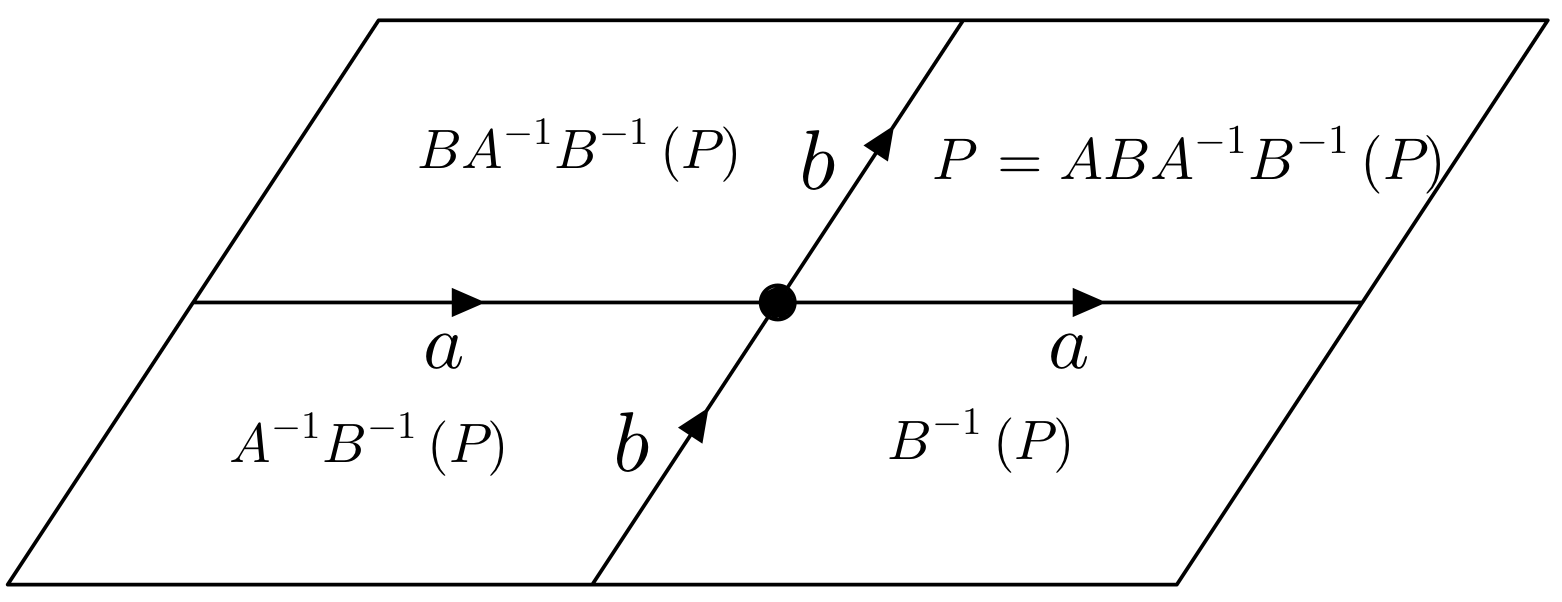}
  \captionof{figure}{Four copies of $P$}
  \label{fig:vertex}
\end{center}

From a more abstract point of view, a key point is that the group $\Gamma$ has the following
property: for every point $P$ in $\EE^2$, there is an open
neighbourhood $U$ such that $\gamma(U)\cap U = \emptyset$ for all
$\gamma$ different from the identity.  We say the action of $\Gamma$
on $\EE^2$ is \emph{properly discontinuous}. This property ensures
that for each $Q\in U$ its orbit
$\Gamma\cdot Q = \{\gamma(Q)\suchthat\gamma\in\Gamma\}$ has a unique
representative (namely $Q$ itself) in $U$, so that $U$ works as a
\emph{coordinate patch} for $\EE/\Gamma$ around $\Gamma\cdot P$.
This, together with the fact that the elements of $\Gamma$ are isometries, means that
$\EE^2 /\Gamma$ has a well defined distance function: indeed, the arc
length of a parametrised curve in $\EE^2 /\Gamma$ can be calculated
using the formula (\ref{length}) which is invariant under isometries.


Note that we can also construct non-compact surfaces in this way. For
example, if we take $\Gamma$ to be the subgroup generated by a single
translation, we obtain a cylinder. This is a locally Euclidean surface
which, unlike the Euclidean torus, can be easily visualised in
$3$-space by rolling up a sheet of paper.

The so-called \emph{Killing--Hopf Theorem} implies that any complete connected 
locally Euclidean surface can be represented as $\EE^2/\Gamma$, where
$\Gamma$ acts freely and properly discontinuously on $\EE^2$ (see, for
example, Stillwell~\cite{stillwell:1992}).

\section{Hyperbolic surfaces}

\subsection{The hyperbolic plane}
We start by describing the \emph{hyperbolic plane} $\HH^2$.
Hyperbolic geometry is different from spherical and Euclidean geometry
in that it is not possible to embed (smoothly) $\HH^2$ in Euclidean
$3$-space in a distance preserving way. Instead
we consider
%
%
the \emph{upper half plane model},
defined by
\begin{displaymath}
  \HH^2 = \{z=x+iy\in\CC\suchthat y>0\}
\end{displaymath}
with element of arc length
\begin{displaymath}
  ds^2 = \frac{dx^2+dy^2}{y^2}.
\end{displaymath}
In the model $\HH^2$ geodesics are open arcs of semi-circles
orthogonal to the real axis $\RR=\{y=0\}\subset\CC$ together with open
half-lines orthogonal to $\RR$. Note that the hyperbolic plane is
complete, so these curves do in fact have infinite hyperbolic length.
Moreover, orientation preserving isometries can be represented by
\emph{M\"obius transformations}
\begin{displaymath}
  z \mapsto A\cdot z = \frac{az+b}{cz+d},
\end{displaymath}
where
\begin{displaymath}
  A=
  \begin{pmatrix}
    a & b \\ c & d
  \end{pmatrix}
   \in \SL(2,\RR)
\end{displaymath}
is a real $2\times 2$-matrix of determinant one. As examples we can
take
$A=
\left(
\begin{smallmatrix}
  \rho & 0 \\
  0 & \rho
\end{smallmatrix}
\right)
$
which gives a hyperbolic translation whose axis is the imaginary axis
in $\HH^2$, and
$A=
\left(
\begin{smallmatrix}
  \cos\theta & -\sin\theta \\
  \sin\theta & \cos\theta
\end{smallmatrix}
\right)
$
which gives a hyperbolic rotation about $i\in\HH^2$ by the angle $2\theta$.

We note that $A$ and
$-A$ define the same Möbius transformation, so the group of
orientation preserving isometries is really the quotient group
$\PSL(2,\RR) = \SL(2,\RR) / \{\pm I\}$.
We shall mostly ignore this distinction in what follows but it will
become relevant in Section~\ref{sec:moduli-space-reps} below.

We finish this section by commenting on the topology of
$\SL(2,\RR)$. Identifying the set of all $2\times 2$-matrices
$\left(
    \begin{smallmatrix}
      a & b\\
      c & d
    \end{smallmatrix}\right)$
  with $\RR^4$, the group $\SL(2,\RR)$ is the subset cut out by the
  equation $ac-bd=1$. Thus it has a topology inherited from
  $\RR^4$. In fact the Implicit Function Theorem applied to this
  equation shows that $\SL(2,\RR)$ is a $3$-dimensional \emph{Lie group}, meaning
  that it can be covered by local coordinate systems in $3$-space and
  that the group operations are differentiable in these
  coordinates.


\subsection{Hyperbolic surfaces}

As we shall see, a closed orientable topological surface of genus $g$
can be given a hyperbolic structure for any $g\geq 2$. In the case of
$g=2$, take an octagon with gluing instructions to create a surface  as illustrated in Figure~\ref{fig:octagon} . If we cut the
octagon along the diameter indicated, we see that indeed the resulting
surface has genus $2$, as desired.
\begin{center}
  \includegraphics[width=0.9\linewidth]{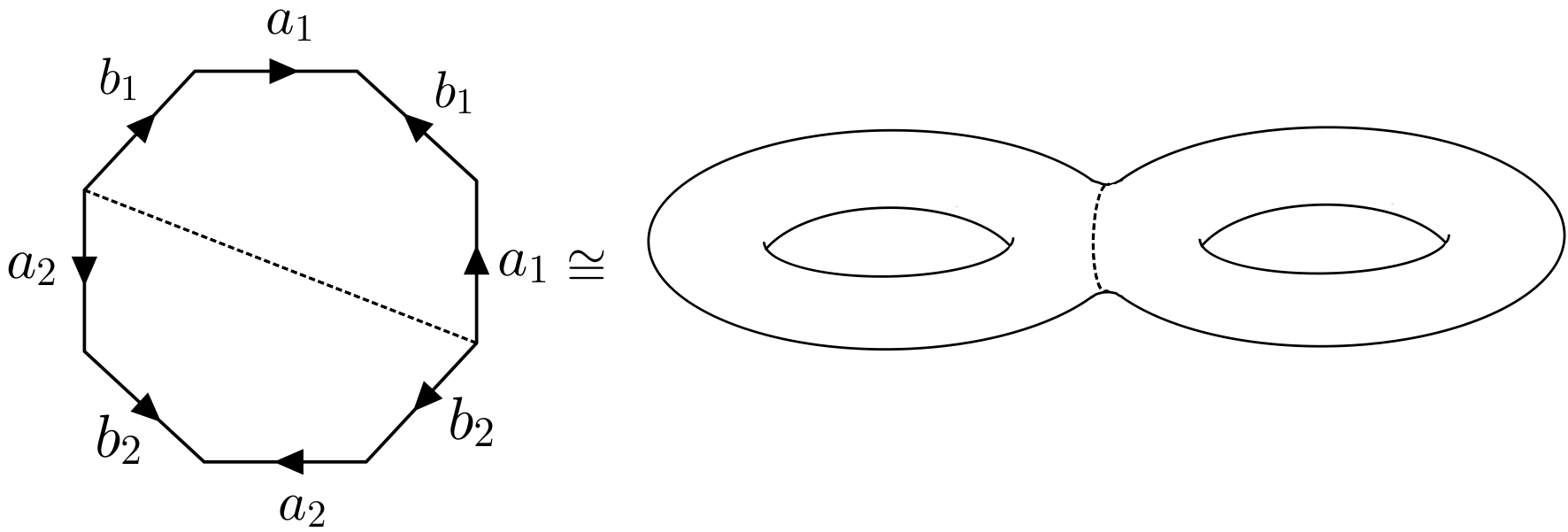}
  \captionof{figure}{Genus 2 surface from an octagon}
  \label{fig:octagon}
\end{center}

In order to get a nice
\emph{hyperbolic} surface, the octagon should be taken in the hyperbolic
plane (it will look very different from that of Figure~\ref{fig:octagon}). And, in a manner analogous to the Euclidean case, we require that pairs of
sides which are to be glued have the same length. Moreover, the
vertices of the octagon all get identified to one point in the
surface, so the internal angles should add up to $2\pi$. This
condition sounds strange to our Euclidean wired brains but, it is a
fact that such an octagon exists.\footnote{In fact, in hyperbolic geometry the
  sum of the internal angles of a polygon depends on its area!}

In order to write the surface as $\HH^2/\Gamma$ for a suitable
subgroup $\Gamma\subset\SL(2,\RR)$ we take hyperbolic translations
$A_i$ and $B_i$ giving the required identifications, and let
$\Gamma$ be the group generated by these translations. The octagon
then becomes a fundamental domain for the action of
$\Gamma$. The condition that the interior angles add up to $2\pi$ is
equivalent to the identity
\begin{displaymath}
  [A_1,B_1][A_2,B_2] = I
\end{displaymath}
in $\SL(2,\RR)$.
In general, we let $\Gamma_g$ be the abstract group
\begin{displaymath}
  \Gamma_g = \langle a_1,b_1,\dots,a_g,b_g \suchthat \prod_{i=1}^g
  [a_i,b_i] = 1 \rangle.
\end{displaymath}
This group is known as a \emph{surface group}.\footnote{The group
  $\Gamma_g$ can be identified wth the fundamental group of a
  topological surface of genus $g$.}
In view of the genus $2$ example it is hopefully not a surprise that
genus $g$ surfaces can be obtained from subgroups of $\SL(2,\RR)$
isomorphic to $\Gamma_g$. In order to study all such subgroups we
consider homomorphisms $\rho\colon\Gamma_g\to \SL(2,\RR)$ (often
also called \emph{representations}). We say that $\rho$ is
\emph{Fuchsian} if it is injective and its image is discrete, i.e.,
consists of isolated points.\footnote{Recall that topological notions
  make sense viewing $\SL(2,\RR)\subseteq\RR^4$.}
When $\rho$ is Fuchsian it can be proved that the action of $\Gamma_g$
 on $\HH^2$ is properly discontinuous. Hence the orbit space 
\begin{displaymath}
  S_\rho := \HH^2 / \rho(\Gamma_g),
\end{displaymath}
is a nice hyperbolic surface of genus $g$ with charts coming from $\HH^2$. 
Conversely, the Killing--Hopf Theorem again tells us that any closed
orientable 
hyperbolic surface is of this form.\footnote{As
  already noted, we should really consider representations  to
  $\PSL(2,\RR)$. However, it turns out that representations defining
  closed hyperbolic  surfaces can always be lifted to $\SL(2,\RR)$.}

However, it is certainly not true that any homomorphism
$\rho\colon\Gamma_g\to\SL(2,\RR)$ defines a closed hyperbolic
surface: for example, the trivial homorphism clearly does not!
This leaves us with the following

\textbf{Question:} Let $\rho\colon\Gamma_g\to\SL(2,\RR)$ be a representation. How
can we tell if $\rho$ defines a closed hyperbolic
surface?

\section{Topology and algebra of $\SL(2,\RR)$}

In order to answer the question at the end of the last section we
shall define an invariant of representations
$\rho\colon\Gamma\to\SL(2,\RR)$. For that we shall need to understand
how the topology and algebra of $\SL(2,\RR)$ interact.

The subgroup $\SO(2)\subseteq \SL(2,\RR)$ of rotation matrices
\begin{math}
  E(\theta) =\left(
  \begin{smallmatrix}
    \cos\theta & -\sin\theta\\
    \sin\theta & \cos\theta
  \end{smallmatrix}\right)
\end{math}
can be identified with a circle.
The map $E\colon\RR\to\SO(2)$, $\theta\mapsto E(\theta)$ wraps the
real line around the circle, and it
satisfies $E(0) = I$ and
$E(\theta_1+\theta_2) = E(\theta_1)E(\theta_2)$. In other words, $E$
is a group homomorphism from the additive group $\RR$ to $\SO(2)$.

Now,
thinking of $\SO(2)$ inside $\SL(2,\RR)$, we want to extend this
picture and find a group $\tSL(2,\RR)$ containing $\RR$, with a
surjective group homomorphism $p\colon\tSL(2,\RR)\to\SL(2,\RR)$ which
restricts to $E\colon\RR\to\SO(2)$, i.e.,
making
the diagram
\begin{displaymath}
  \begin{CD}
    \RR @>>> \tSL(2,\RR) \\
    @VV{E}V @VV{p}V \\
    \SO(2) @>>> \SL(2,\RR)
  \end{CD}
\end{displaymath}
commutative (the horisontal maps are inclusions).
In fact it follows
from general theory that such a group exists and is essentially
unique; it is known as the \emph{universal covering group} of
$\SL(2,\RR)$. We shall explain how it can be constructed explicitly,
following \cite[\S1.8]{lion-vergne:1980}, using the action of
$\SL(2,\RR)$ on the hyperbolic plane $\HH^2$.

So let $A=
\spmat{a}{b}{c}{d}
\in\SL(2,\RR)$. Write
\begin{displaymath}
  j(A,z) = cz+d
\end{displaymath}
for the denominator of
$A\cdot z$.
Note that 
\begin{displaymath}
  j(E(\theta),i) = i\sin\theta+\cos\theta  = e^{i\theta},
\end{displaymath}
which indicates that this function can used to keep track of the
phase $\theta$.
For each fixed $A$, we can consider the holomorphic function
\begin{align*}
  \HH^2 &\to \CC\setminus\{0\}\\
  z &\mapsto j(A,z) = cz+d.
\end{align*}
Observe that $cz+d\neq 0$ for $z\in\HH$.
Therefore, since $\HH^2$ is simply
connected, there is a continuous determination of the logarithm of
$j(A,z) = cz+d$, i.e., a continuous map
\begin{math}
  \phi\colon \HH^2 \to \CC
\end{math}
such that
\begin{displaymath}
  e^{\phi(z)} = cz+d.
\end{displaymath}
We want to make the point that such a $\phi$ can be explicitly calculated: simply choose a value $\theta$ for the
argument $\arg(ci+d)$, write $ci+d=re^{i\theta}$ and let
$\phi(i)=\log(r)+{i\theta}$. Then 
\begin{displaymath}
  \phi(z) - \phi(i) = \int_{\gamma}\frac{dz}{z}
  =\int_0^1\frac{c(z-i)dt}{c(i+t(z-i))+d}
\end{displaymath}
(here $\gamma$ parametrises the segment joining $ci+d$ to $cz+d$). 
Note that $\phi$ is not unique, but it is uniquely determined by the
choice of $\phi(i)$. Thus any two determinations $\phi$
differ by an integer multiple of $2\pi i$.

Now {define} $\tSL(2,\RR)$ as the set of pairs
\begin{math}
  (A,\phi),
\end{math}
where $A\in\SL(2,\RR)$ and $\phi\colon\HH^2\to\CC$ is any  continuous determination of the logarithm
of $j(A,z)$. 
The product on $\tSL(2,\RR)$ is defined by
\begin{displaymath}
  (A_1,\phi_1)\cdot(A_2,\phi_2) = (A_1A_2,\tilde{\phi}),
\end{displaymath}
where
\begin{displaymath}
  \tilde{\phi}(z) := \phi_1(A_2\cdot z) + \phi_2(z).
\end{displaymath}
It is an easy calculation to check that
\begin{displaymath}
  j(A_1A_2,z) = j(A_1,A_2\cdot z)j(A_2,z)
\end{displaymath}
which implies that indeed
\begin{displaymath}
  e^{\tilde{\phi}(z)} = j(A_1A_2, z)
\end{displaymath}
as required. It is not hard to check that this defines a group
structure on $\tSL(2,\RR)$. For example, for $A=I$, the identity matrix, we can
take $\phi(z)=0$ and $(A,0)$ is the neutral element. Moreover,
\begin{math}
  (A,\phi)^{-1} = (A^{-1},\tilde{\phi}),
\end{math}
where
\begin{equation}
  \label{eq:phi-inverse}
  \tilde{\phi}(z) = -\phi(A^{-1}\cdot z).
\end{equation}

The projection $p\colon\tSL(2,\RR)\to\SL(2,\RR)$ is of course just
$(A,\phi)\mapsto A$. The inclusion $\RR\into\tSL(2,\RR)$ is given by
\begin{math}
  \theta\mapsto(E(\theta),\phi_\theta),
\end{math}
where $\phi_\theta$ is the determination of $\log(j(E(\theta),z))$
which satisfies $\phi_\theta(i) = i\theta$ (recall that
$j(E(\theta),i) = e^{i\theta}$).

\begin{proposition}
  \label{prop:kernel-p}
  The kernel of $p\colon\tSL(2,\RR)\to\SL(2,\RR)$ consists of pairs
  $(I,\phi)$, where $I$ is the
  identity matrix and $\phi$ is a constant function taking values in
  $2\pi\ZZ\subset\RR$.
\end{proposition}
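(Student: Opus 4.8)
The plan is to unwind the definition of $p$ and then reduce the statement about $\phi$ to the elementary fact that a continuous map from a connected space into a discrete set is constant. Since $p(A,\phi) = A$, an element $(A,\phi)$ lies in $\ker p$ exactly when $A = I$; thus the matrix coordinate is forced immediately, and the only real content concerns which functions $\phi$ can appear.

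With $A = I$ I would compute $j(I,z) = 0\cdot z + 1 = 1$ for all $z \in \HH^2$. By the defining condition for membership in $\tSL(2,\RR)$, the map $\phi$ must then satisfy $e^{\phi(z)} = j(I,z) = 1$ for every $z$, so $\phi$ takes values in $\ker(\exp) = 2\pi i\ZZ$. The crux is the topological step: $\phi$ is continuous by construction, $\HH^2$ is connected (indeed convex), and $2\pi i\ZZ$ is a discrete subset of $\CC$; hence $\phi$ must be constant, say $\phi \equiv 2\pi i n$ with $n \in \ZZ$. This shows that every element of $\ker p$ has the asserted form.

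For the reverse inclusion I would check that each candidate is genuine: for any $n\in\ZZ$, the constant function $\phi \equiv 2\pi i n$ satisfies $e^{\phi} = 1 = j(I,z)$, and so is a valid determination of the logarithm of $j(I,z)$; therefore $(I,\phi)$ is a bona fide element of $\tSL(2,\RR)$ and evidently lies in $\ker p$. I do not expect a serious obstacle, since the whole argument is short, but the one point deserving care is the connectedness--discreteness step: it is precisely the mechanism that forces a global determination of the logarithm to be rigid, the same principle that made the explicit construction of $\tSL(2,\RR)$ possible in the first place.
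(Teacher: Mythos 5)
Your proof is correct and follows essentially the same route as the paper: force $A=I$ via $p(A,\phi)=A$, compute $j(I,z)=1$, and conclude that a continuous determination of the logarithm of the constant function $1$ must be a constant in the kernel of $\exp$; the paper's proof is just a compressed version of this, leaving the connectedness--discreteness step (and the easy reverse inclusion) implicit where you spell them out. One minor point worth noting: your computation correctly lands in $2\pi i\ZZ$ (purely imaginary constants), while the proposition's phrasing ``$2\pi\ZZ\subset\RR$'' tacitly identifies such constants with real numbers via the inclusion $\theta\mapsto(E(\theta),\phi_\theta)$, $\phi_\theta(i)=i\theta$ --- a notational discrepancy in the paper rather than a gap in your argument.
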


\begin{proof}
  Clearly $p(A,\phi) = I$ if and only if $A=I$. Moreover,
  $j(I,z) = 1$, so $\phi$ is a determination of the logarithm of the
  constant function $z\mapsto 1\in C$, i.e., it is a constant
  $\phi\in 2\pi\ZZ\subset\RR$.
\end{proof}

\section{The Toledo Invariant}
\label{sec:toledo-invariant}

Let $\rho\colon\Gamma\to\SL(2,\RR)$ be a representation. We shall
associate an integer invariant to $\rho$. This invariant is known as
the Toledo invariant, even though it was actually introduced by
Milnor~\cite{milnor:1957}, and sometimes is referred to as the Euler number.
Write
\begin{displaymath}
  A_i = \rho(a_i),\quad B_i=\rho(b_i)
\end{displaymath}
for $i=1,\dots,g$. Choose \emph{lifts} $\tilde{A}_i$ and $\tilde{B}_i$ in
$\tSL(2,\RR)$ such that $p(\tilde{A}_i)=A_i$ and $p(\tilde{B}_i)=B_i$, and define the \emph{Toledo invariant} of $\rho$ to be
\begin{displaymath}
  \tau(\rho) = \frac{1}{\pi}\prod_{i=1}^g[\tilde{A}_i,\tilde{B}_i].
\end{displaymath}
In view of the relation defining $\Gamma_g$, the product
  $\prod_{i=1}^g[\tilde{A}_i,\tilde{B}_i]$ is in the kernel of
  $p$. Hence Proposition~\ref{prop:kernel-p} shows that {the
    Toledo invariant is an even integer}.\footnote{Odd Toledo
    invariants arise from representations $\rho\colon\Gamma_g\to\PSL(2,\RR)$ which do
    not lift to $\SL(2,\RR)$.}

From the description of $\tSL(2,\RR)$ of the preceding section,
  it is easy to check that the Toledo invariant is well defined, i.e.,
  that it does not depend on the choice of lifts: the main point is
  that the ambiguity in the choice of $\phi$ is canceled by
  (\ref{eq:phi-inverse}), because each lift occurs together with its
  inverse in the commutator. Moreover, the
  Toledo invariant of a representation  defined by matrices $A_i$ and $B_i$
 {can be explicitly calculated}.

\section{Goldman's theorem}

A celebrated inequality due to Milnor \cite{milnor:1957} states that
\begin{displaymath}
  \abs{\tau(\rho)} \leq 2g-2
\end{displaymath}
for every representation $\rho\colon\Gamma_g\to\SL(2,\RR)$. The
following beautiful result shows that representations with maximal
Toledo invariant (known as \emph{maximal representations}) have a special
geometric significance.

\begin{theorem}[Goldman \cite{goldman:1985}]
  \label{thm:goldman-max}
  A representation $\rho\colon\Gamma_g\to\SL(2,\RR)$ is Fuchsian if
  and only if $\abs{\tau(\rho)} = 2g-2$. 
\end{theorem}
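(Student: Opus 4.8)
The plan is to prove the two implications separately. For the forward direction, suppose $\rho$ is Fuchsian, so that $S_\rho = \HH^2/\rho(\Gamma_g)$ is a closed hyperbolic surface of genus $g$. First I would reinterpret $\tau(\rho)$ geometrically as the Euler number of the flat circle bundle over $S_\rho$ with fibre $\partial\HH^2$ coming from the boundary action of $\rho$. The lifts $\tilde A_i,\tilde B_i$ record the cumulative phase along the sides of the fundamental $4g$-gon, so that the central value $\prod_i[\tilde A_i,\tilde B_i]$ computes the total turning of this bundle. By the Gauss--Bonnet theorem --- constant curvature $-1$, interior angles summing to $2\pi$ --- this turning is, up to the normalisation in the definition of $\tau$, the curvature integral $\tfrac{1}{2\pi}\int_{S_\rho}K\,dA = \chi(S_\rho) = 2-2g$, so $\abs{\tau(\rho)} = 2g-2$.

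For the converse I would work throughout with the action of $\SL(2,\RR)$ on the circle at infinity $\partial\HH^2\cong S^1$ and the induced action of $\tSL(2,\RR)$ on its universal cover $\RR$. The translation (rotation) number $\mathrm{rot}\colon\tSL(2,\RR)\to\RR$ is a homogeneous quasimorphism, and, by Proposition~\ref{prop:kernel-p}, the product $\prod_i[\tilde A_i,\tilde B_i]$ is a central element whose rotation number is the integer recorded by $\tau(\rho)$. Milnor's inequality is exactly the resulting bound on this central contribution, and it is saturated only when each commutator $[\tilde A_i,\tilde B_i]$ simultaneously realises the extreme value of $\mathrm{rot}$; thus $\abs{\tau(\rho)} = 2g-2$ forces every pair $(A_i,B_i)$ into an extremal dynamical configuration on $S^1$.

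The heart of the matter is to upgrade this coarse, quasimorphism-level extremality into the fine conclusion that $\rho$ is discrete and faithful, and I expect this to be the main obstacle. My approach would be through the $\rho$-equivariant developing map $f\colon\tilde\Sigma_g\to\HH^2$ from the universal cover of the topological surface, which always exists because $\HH^2$ is contractible. The Euler number $\tau(\rho)$ measures the signed winding of $df$, so maximality means this winding has no cancellation; the key step is to conclude that $df$ is therefore everywhere nondegenerate and orientation-preserving. Granting this, $f$ is an immersion, and a completeness argument promotes it to a covering and hence --- since $\HH^2$ is simply connected --- to a diffeomorphism. Pushing the hyperbolic metric forward along $f$ equips $\Sigma_g$ with a genuine hyperbolic structure having holonomy $\rho$, which is exactly the statement that $\rho$ is Fuchsian.

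To secure the key nondegeneracy step I see two routes. The first, in Goldman's spirit, is global: since $\tau$ is continuous and integer-valued it is locally constant, so $\{\abs{\tau} = 2g-2\}$ is a union of components of the representation variety; the Fuchsian locus is open there (cocompact Fuchsian representations are structurally stable) and nonempty, and the developing-map argument above shows it is also closed, so it fills the whole maximal level set. The second route is the rigidity of the maximal bounded Euler class (Ghys--Matsumoto): maximality of $\tau$ forces $\rho$ to be semiconjugate to a Fuchsian representation, and a semiconjugacy between minimal actions on $S^1$ is a conjugacy, so $\rho$ is itself Fuchsian.
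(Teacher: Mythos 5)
The paper you are comparing against does not actually prove this theorem: it is quoted from Goldman \cite{goldman:1985} as a known result, and the only proof-related remark in the paper is that Hitchin's Higgs bundle parametrisation of $\mathcal{M}_{2g-2}$ in Section~\ref{sec:higgs-bundles} yields an alternative proof. So your proposal has to stand on its own, and it does not yet. The easy direction (Fuchsian $\Rightarrow$ maximal) via Gauss--Bonnet on a fundamental polygon is fine. The converse breaks down at exactly the point you flag as ``the heart of the matter''. Your claim that maximality forces $df$ to be everywhere nondegenerate is unjustified: $\tau(\rho)$ computes the total signed area $\tfrac{1}{2\pi}\int f^*\omega$ ($\omega$ the hyperbolic area form) of an \emph{arbitrary} $\rho$-equivariant map $f\colon\tilde{S}_g\to\HH^2$, and such a map will in general have degenerate and orientation-reversing points no matter what $\tau$ is; producing a nondegenerate representative is where all the work lies (e.g.\ taking $f$ harmonic and running a Bochner-type argument, which is in essence the Higgs-bundle proof the paper alludes to).

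Neither of your two routes closes this gap as stated. Route (a): openness (stability) plus closedness of the Fuchsian locus inside $\{\abs{\tau}=2g-2\}$ only shows that it is a \emph{union of connected components} of that level set; to conclude it is the whole level set you need the level set to be connected. That is not a soft fact --- it is the main theorem of Goldman's later paper \cite{goldman:1988} --- and, as the present paper itself points out, it is actually \emph{false} for $\SL(2,\RR)$: the maximal locus $\mathcal{R}_{2g-2}$ has $2^{2g}$ connected components, and only its projection to the $\PSL(2,\RR)$-moduli space is connected. So for the group in the statement your argument cannot conclude, and even at the $\PSL(2,\RR)$ level it silently assumes a theorem at least as deep as the one being proved. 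Route (b): the Ghys--Matsumoto rigidity of the maximal bounded Euler class is essentially equivalent to Goldman's theorem (and historically builds on it), so invoking it outsources the entire content; moreover ``a semiconjugacy between minimal actions is a conjugacy'' requires first showing that the $\rho$-action on $S^1$ is minimal (Denjoy-type exceptional minimal sets must be excluded), and topological conjugacy to a Fuchsian action still needs a further argument (e.g.\ via rotation numbers) to give discreteness of $\rho$. In short, you have correctly isolated the difficulty and correctly identified the two standard families of proofs, but each route as you present it has a genuine hole precisely at that difficulty.
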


\begin{remark}
  One might wonder about the significance of the sign of the Toledo
  invariant. If we conjugate a representation $\rho$ by the outer
  automorphism of $\SL(2,\RR)$ given by conjugation by a reflection we
  obtain a representation $\bar{\rho}$ with
  $\tau(\bar{\rho}) = -\tau(\rho)$. In fact, the hyperbolic surface
  $S_{\bar{\rho}}$ is obtained from $S_{\rho}$ by a change of
  orientation, i.e., by composing all charts with a reflection in
  $\HH^2$.
\end{remark}

\section{The moduli space of representations}
\label{sec:moduli-space-reps}

Let us now take a global view and consider all representations of
$\Gamma_g$ in $\SL(2,\RR)$ simultaneously.
The \emph{representation space} for representations of $\Gamma_g$ in
$\SL(2,\RR)$ is the set of homomorphisms
\begin{math}
  \Hom(\Gamma_g,\SL(2,\RR)).
\end{math}
It is natural to consider $\rho_1$ and $\rho_2$ equivalent if they
differ by overall conjugation by an element of $\SL(2,\RR)$,
corresponding to a change of basis in $\RR^2$.
It also turns out that two hyperbolic structures on the same topological surface are isometric by an isometry which can be
continuously deformed to the identity if and only if the corresponding
Fuchsian representations are equivalent in this sense.
Thus the \emph{moduli
  space} of representations is defined to be the orbit space 
\begin{displaymath}
  \mathcal{R}(\Gamma_g,\SL(2,\RR))
  = \Hom(\Gamma_g,\SL(2,\RR)) / \SL(2,\RR)
\end{displaymath}
under the conjugation action.\footnote{In order to get a Hausdorff
  quotient, one should in fact exclude representations whose action on
$\RR^2$ is not semisimple.}

A homomorphism $\rho\colon\Gamma_g\to\SL(2,\RR)$ is determined by 
$2g$ matrices
\begin{displaymath}
    A_i = \rho(a_i),\; B_i=\rho(b_i),\quad i=1,\dots,g
\end{displaymath}
satisfying the single relation $\prod[A_i,B_i] = I$.  Hence
$\Hom(\Gamma_g,\SL(2,\RR))$ can be identified with the subspace of
$\RR^{6g}$ cut out by the $3$ scalar equations given by
$\prod[A_i,B_i] = I$ (the equation takes values in the $3$-dimensional
group $\SL(2,\RR)$). It follows that it is
a variety of dimension $6g-3$.
The conjugation action by $\SL(2,\RR)$ reduces the
dimension by $3$, and so the moduli space has dimension 
\begin{displaymath}
  \dim \mathcal{R}(\Gamma_g,\SL(2,\RR)) = 6g-6.
\end{displaymath}

The Toledo invariant separates the moduli space into subspaces
\begin{displaymath}
  \mathcal{R}_d \subseteq \mathcal{R}(\Gamma_g,\SL(2,\RR))
\end{displaymath}
corresponding to representations with invariant
$d$. Goldman~\cite{goldman:1988} showed that the
$\mathcal{R}_d$ are in fact connected components of the moduli space,
except in the maximal case $\abs{d}=2g-2$. It turns out that
$\mathcal{R}_{2g-2}$ has $2^{2g}$ connected components. However, these
components get identified after
projecting onto
\begin{displaymath}
  \mathcal{R}(\Gamma_g,\PSL(2,\RR)),
\end{displaymath}
which thus has just one connected component with Toledo invariant
$2g-2$. This is not
surprising because, by Goldman's
Theorem~\ref{thm:goldman-max}, the subspace  $\mathcal{R}_{2g-2}$ is
exactly the locus of
Fuchsian representations and, moreover, any two Fuchsian representations
 into $\SL(2,\RR)$ define the same hyperbolic surface if and only if
they coincide after projecting to $\PSL(2,\RR)$.  Accordingly, the
corresponding connected component $\mathcal{T} \subseteq \mathcal{R}(\Gamma_g,\PSL(2,\RR))$ is known as the \emph{Fuchsian locus}.
As we have seen, it parametrises all
hyperbolic structures on the topological surface $S_g$ up to a natural
equivalence. It is a classical result that the space of such hyperbolic
structures can be identified with $\RR^{6g-6}$. In the next section we shall explain how a
parametrisation of this space can be obtained using Higgs bundles.




\section{Higgs bundles}
\label{sec:higgs-bundles}

We now describe how the results of the previous section can be
understood using non-abelian Hodge theory, a subject founded by
Hitchin \cite{hitchin:1987a} and Simpson \cite{simpson:1992}.

\subsection{Riemann surfaces and holomorphic bundles}
\label{sec:riemann-surfaces}

A \emph{Riemann surface} $X$ is a topological surface together with a
family of local charts which together cover the surface,
and are such that changes of coordinates are holomorphic functions between
open sets in $\CC$. An example of this is the \emph{Riemann sphere}
$\hat\CC = \CC \cup \infty$: we use the standard coordinate $z$ in
$\CC$ and around $\infty\in\hat\CC$ we use the coordinate $w =
1/z$. Thus the change of coordinates $T\colon\CC\setminus\{0\}\to
\CC\setminus\{0\}$ given by $w=T(z)=1/z$ is holomorphic in the domain
where both $z$ and $w$ are defined.

In particular, if we have a hyperbolic surface $S_g \cong \HH^2 / \Gamma$ for a Fuchsian representation of $\Gamma$,
then the local coordinates in $\HH^2$ give $S_g$ the structure of a
Riemann surface: indeed the changes of coordinates are Möbius
transformations of $\HH^2$, which are certainly
holomorphic.
We write $X_\rho$ for the Riemann surface constructed from a Fuchsian
representation $\rho$ in this way.

Note that not all holomorphic maps define isometries of $\HH^2$, so the concept
of Riemann surface is less rigid than that of hyperbolic
surface. However, the
famous Uniformisation Theorem, due to Köbe and Poincaré, asserts that
any Riemann surface can be represented as a hyperbolic surface. This
means, in particular, that the space of all Riemann surfaces with the
same underlying topological surface of genus $g$
(up to a suitable equivalence) can be identified with the Fuchsian
locus $\mathcal{T}$. When thought of in this way, it is
known as \emph{Teichmüller space}.

A \emph{holomorphic line bundle} $L\to X$ on a Riemann surface $X$ is a
holomorphic family of $1$-dimensional complex vector spaces
parametrised by $X$.  Thus, for each $p\in X$ we have a
$1$-dimensional complex vector space $L_p$, which varies
holomorphically with $p$. The simplest example is the \emph{trivial
  bundle} $L=X\times\CC\to X$; here the map is projection onto $X$
and the fibre $L_p=\{p\}\times\CC$ for $p\in X$ with its vector space
structure coming from $\CC$. Locally on $X$, a
holomorphic line bundle is required to look like the product
$U\times\CC$, where $U\subseteq X$ is open.  We say that $L$ is
trivialised over $U$.  This means that a holomorphic line bundle can
be given by an open covering $\{U_{\alpha}\}$ of $X$ and 
\emph{trivialisations}
\begin{displaymath}
L_{|U_{\alpha}}\cong U_{\alpha}\times\CC
\end{displaymath}
for each $\alpha$. This gives rise to 
\emph{transition functions}
\begin{displaymath}
g_{\alpha\beta}\colon U_{\alpha}\cap U_{\beta}\to\CC\setminus\{0\}
\end{displaymath}
which compare the isomorphisms $L_p\cong\CC$ given by the
trivialisations over $U_{\alpha}$ and $U_{\beta}$, respectively.

More important
than the line bundles themselves are their \emph{sections}. These are
holomorphic maps $s\colon X\to L$ such that $s(p)\in L_p$ for all
$p\in X$. A section of the trivial bundle $U\times\CC$ over $U$ is of
course nothing but a map $s\colon U\to\CC$, and if we have
local trivialisations of a line bundle $L$ as above, a holomorphic
section $s$ corresponds to a collection of holomorphic maps
$s_\alpha\colon U_{\alpha}\to\CC$ which glue correctly, i.e., satisfy
the condition
\begin{displaymath}
  s_\alpha(p) = g_{\alpha\beta}(p)s_\beta(p)
\end{displaymath}
for $p\in U_{\alpha}\cap U_{\beta}$.
As an illustrative example, we take the \emph{canonical bundle}
$K\to X$. Its sections are \emph{holomorphic differentials}. In a
local coordinate $z$ on $X$ a holomorphic differential, say $\alpha$, can be written
\begin{displaymath}
  g(z)dz
\end{displaymath}
for a holomorphic function $g(z)$ and if $h(w)dw$ is the representation of
$\alpha$ in another holomorphic coordinate $w=T(z)$, then
\begin{displaymath}
  g(z)dz = h(T(z))d(T(z)) = h(T(z))T'(z)dz.
\end{displaymath}
Thus a holomorphic differential can be represented by a collection of
holomorphic functions locally defined on coordinate charts which
transform according to the preceding rule. It turns out that the
vector space of holomorphic differentials on a closed Riemann surface
$X$ of genus $g$, usually denoted by
$H^0(X,K)$, is finite dimensional, of dimension $2g-2$. More generally,
the vector space $H^0(X,L)$ of holomorphic sections of any holomorphic line
bundle $L\to X$ is finite dimensional. Any holomorphic line bundle has
a topological invariant called its \emph{degree}; in case $L$ has a non-zero
holomorphic section, this is the number of zeroes of such a section,
counted with multiplicity. For example, the degree of the canonical
bundle is $2g-2$. The fact that this is the same as the dimension of
$H^0(X,K)$ is a consequence of a fundamental result known as the
Riemann--Roch formula.

We can perform the usual operations of linear algebra, like
taking duals and tensor products, fibrewise on line bundles. Thus, if
$L$ and $M$ are line bundles with transition functions
$g_{\alpha\beta}$ and $h_{\alpha\beta}$, respectively, the tensor
product $L\otimes M$ has transition functions
$g_{\alpha\beta}h_{\alpha\beta}$ (pointwise multiplication in
$\CC\setminus\{0\}$) and the dual bundle $L^*$ has transition functions
$g_{\alpha\beta}^{-1}$.



\subsection{Higgs bundles}

A \emph{$\PSL(2,\RR)$-Higgs bundle} on $X$ consists of three pieces of
data
\begin{displaymath}
  (L,\beta,\gamma)
\end{displaymath}
where, $L\to X$ is a holomorphic line bundle, and
$\beta\in H^0(X;K\otimes L)$ and $\gamma\in H^0(X;K\otimes L^*)$ can
be seen as holomorphic differentials which take values in the line
bundles $L$ and $L^*$, respectively.

In a manner analogous to the conjugation
action on representations, there is a natural notion of isomorphism of
Higgs bundles, and the set of isomorphism classes of
$\PSL(2,\RR)$-Higgs bundles forms the \emph{moduli
  space} $\mathcal{M}(X,\PSL(2,\RR))$. It is a complex algebraic
variety of complex dimension $3g-3$. We note that in order to get a
reasonable moduli space it is necessary to restrict to so-called
semistable Higgs bundles. This is analogous to the way in which one
restricts to semisimple representations in the moduli space of
representations.

The \emph{Non-abelian Hodge Theorem} (due to Corlette, Donaldson,
Hitchin and Simpson) for this situation states the following.

\begin{theorem}
  There is a real analytic isomorphism
  \begin{displaymath}
    \mathcal{R}(\Gamma_g,\PSL(2,\RR))
    \cong \mathcal{M}(X,\PSL(2,\RR))
  \end{displaymath}
\end{theorem}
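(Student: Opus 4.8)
The plan is to realise the stated isomorphism as the \emph{non-abelian Hodge correspondence}, passing through an intermediate object: flat bundles carrying a harmonic metric. On the left, a class in $\mathcal{R}(\Gamma_g,\PSL(2,\RR))$ is the conjugacy class of a homomorphism $\rho\colon\Gamma_g\to\PSL(2,\RR)$; it determines a flat principal $\PSL(2,\RR)$-bundle $P_\rho$ over $X$ with a flat connection $D$ whose holonomy is $\rho$. On the right, a point of $\mathcal{M}(X,\PSL(2,\RR))$ is the isomorphism class of a triple $(L,\beta,\gamma)$, which we repackage as a rank-$2$ Higgs bundle: the holomorphic bundle $V = L\oplus L^{*}$ together with the Higgs field
\begin{displaymath}
  \Phi = \begin{pmatrix} 0 & \beta \\ \gamma & 0 \end{pmatrix}\colon V\to V\otimes K,
\end{displaymath}
whose off-diagonal shape encodes the reduction of structure group to $\PSL(2,\RR)$. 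The task is to construct mutually inverse maps between these two moduli spaces and to promote the resulting bijection to a real analytic isomorphism.

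First I would treat the passage from representations to Higgs bundles. For a \emph{reductive} $\rho$ --- exactly the representations with semisimple action that survive in the Hausdorff quotient --- the theorem of Corlette and Donaldson furnishes a $\rho$-equivariant harmonic map $\wt X\to\HH^2 = \PSL(2,\RR)/\PSO(2)$, equivalently a harmonic reduction $h$ of $P_\rho$ to the maximal compact $\PSO(2)$. Reading $D$ in the associated rank-$2$ model and splitting it according to $h$ gives
\begin{displaymath}
  D = \dbar_h + \partial_h + \Phi + \Phi^{*_h},
\end{displaymath}
where $\dbar_h$ defines a holomorphic structure and $\Phi$ is an endomorphism-valued $(1,0)$-form. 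On a Riemann surface the harmonic map equation is precisely $\dbar_h\Phi = 0$, so $\Phi$ is a holomorphic Higgs field, and the $\PSO(2)$-reduction forces $V = L\oplus L^{*}$ with $\Phi$ off-diagonal, producing the data $(L,\beta,\gamma)$. Flatness of $D$ makes this Higgs bundle polystable, hence a point of $\mathcal{M}(X,\PSL(2,\RR))$.

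For the reverse direction I would invoke the Hitchin--Simpson existence theorem: every polystable Higgs bundle $(V,\Phi)$ admits a Hermitian metric $h$, unique up to the evident symmetries, solving Hitchin's self-duality equation
\begin{displaymath}
  F_h + [\Phi,\Phi^{*_h}] = 0.
\end{displaymath}
The connection $D = \dbar + \partial_h + \Phi + \Phi^{*_h}$ is then flat, and its holonomy is a reductive representation of $\Gamma_g$ in $\PSL(2,\RR)$, well defined up to conjugation. Since both constructions amount to reading the same pair $(D,h)$ in two ways, they are mutually inverse once one checks that the harmonic metric attached to $\rho$ and the self-dual metric attached to $(V,\Phi)$ coincide --- which again follows from the uniqueness statements --- and that the correspondence matches reductivity with polystability and intertwines conjugation with isomorphism of Higgs bundles. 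A dimension count ($\dim_\RR\mathcal{R} = 6g-6 = \dim_\RR\mathcal{M}$) is consistent with a bijection.

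The main obstacle is analytic and lies in the two existence theorems: solving the nonlinear elliptic systems --- the equivariant harmonic map equation and the self-duality equation $F_h + [\Phi,\Phi^{*_h}] = 0$ --- and establishing uniqueness. These are the genuinely hard results of Corlette, Donaldson, Hitchin and Simpson; by comparison the remaining identifications are bookkeeping. A secondary task, which I would carry out last, is to upgrade the bijection to a \emph{real analytic} isomorphism by exhibiting the natural analytic structures on each side and invoking smooth dependence of the PDE solutions on their data. Finally I would stress that this isomorphism depends on the fixed Riemann surface $X$: the holomorphic structure on the Higgs side is built from $X$, whereas the representation space does not see it.
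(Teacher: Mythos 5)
Your proposal follows the same route as the paper: in fact the paper gives no proof at all --- it states the result as the Non-abelian Hodge Theorem and attributes it to Corlette, Donaldson, Hitchin and Simpson --- and your outline (Corlette--Donaldson harmonic metric in one direction, the Hitchin--Simpson existence theorem for the self-duality equation in the other, the two uniqueness statements making the constructions mutually inverse, reductivity matched with polystability, conjugation with isomorphism) is precisely the standard argument from that cited literature. At the level of architecture there is nothing to object to, and correctly identifying that the analytic existence theorems carry all the weight is the right assessment.

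There is, however, one concrete error in your setup, and it matters precisely because the group is $\PSL(2,\RR)$ rather than $\SL(2,\RR)$. With the paper's conventions, $\beta\in H^0(X,K\otimes L)$ and $\gamma\in H^0(X,K\otimes L^{*})$; but if you set $V=L\oplus L^{*}$, the off-diagonal blocks of a Higgs field $V\to V\otimes K$ are sections of $K\otimes\Hom(L^{*},L)=K\otimes L^{2}$ and of $K\otimes L^{-2}$, so your matrix $\Phi$ does not type-check against the data $(L,\beta,\gamma)$. The rank-two model that does match is $V=L^{1/2}\oplus L^{-1/2}$ --- but a square root of $L$ exists only when $\deg L$ is even, and choosing one is exactly choosing a lift of the corresponding representation to $\SL(2,\RR)$: this is the paper's footnote about odd Toledo invariants, and the $2^{2g}$ choices of square root in the maximal case $L=K$ are the $2^{2g}$ components of $\mathcal{R}_{2g-2}$ discussed in Section 7. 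So for odd $\deg L$ there is no rank-two vector bundle repackaging at all, and your correspondence as written would miss those components entirely. The fix is standard: run the correspondence for principal $\PSL(2,\RR)$-bundles directly, with the harmonic $\PSO(2)$-reduction producing the line bundle $L$ (associated to the complexified Cartan subgroup) and the Higgs field valued in $(L\oplus L^{*})\otimes K$, matching the paper's definition of a $\PSL(2,\RR)$-Higgs bundle verbatim; equivalently, use the adjoint bundle $\mathcal{O}\oplus L\oplus L^{*}$ with its orthogonal structure, via $\PSL(2,\CC)\cong\SO(3,\CC)$. With that substitution the rest of your argument --- the two existence theorems, the uniqueness matching, and the analytic upgrade --- is the correct proof.
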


This is a remarkable theorem for many reasons. Here we just point out
that while the character variety $\mathcal{R}$ is \textbf{real} and
depends only on the topological surface of genus $g$ (through its
fundamental group), the moduli space $\mathcal{M}$ depends on the
Riemann surface structure $X$ given to the topological surface and has a \textbf{complex} structure.

For fixed $d$ we denote by $\mathcal{M}_d$ the subspace of
$\PSL(2,\RR)$-Higgs bundles $(L,\beta,\gamma)$ with $\deg(L)=d$. Then
we have 
$\mathcal{R}_d\cong\mathcal{M}_d$ under
the non-abelian Hodge Theorem. In particular, the Fuchsian locus
$\mathcal{T}$ corresponds to $\mathcal{M}_{2g-2}$.

\subsection{Hitchin's parametrisation of $\mathcal{T}$}

A particular class of $\PSL(2,\RR)$-Higgs bundles can be obtained by
taking $L=K$. Then $\gamma$ is a section of the line bundle $K\otimes
K^*$ which is naturally isomorphic to the trivial line bundle on
$X$. In other words, $\gamma$ is simply a holomorphic function on $X$,
so we can set $\gamma=1$ (the constant function). Moreover, $\beta$ is
a section of $K^2=K\otimes K$. In other words it is a \emph{quadratic
  differential}, so it can locally be written as $\beta(z) =
b(z)(dz)^2$, where $b(z)$ satisfies an appropriate transformation rule under
changes of coordinates. The vector space $H^0(X,K^2)$ of quadratic
differentials on $X$ has complex dimension $3g-3$ which equals the
dimension of the moduli space $\mathcal{M}(X,\PSL(2,\RR))$.
This construction defines a map
\begin{align*}
  \Psi\colon H^0(X,K^2) &\to \mathcal{M}(X,\PSL(2,\RR)),\\
  \beta&\mapsto (K,\beta,1).
\end{align*}
The semistability condition alluded to earlier implies that all Higgs
bundles in $\mathcal{M}_{2g-2}$ arise in this way. Hence $\Psi$ is an
isomorphism onto its image $\mathcal{M}_{2g-2}$.

From the non-abelian Hodge Theorem we already knew that
$\mathcal{M}_{2g-2}\cong\mathcal{T}$ is a connected component. But
the Higgs bundle construction gives an alternative proof. Using gauge
theoretic methods Hitchin
also shows that $\mathcal{M}_{2g-2}$ parametrises all 
hyperbolic metrics on the topological surface underlying
$X$. Moreover, under
this parametrisation the Higgs bundle $(K,1,0)$ corresponds to the
hyperbolic metric which uniformises $X$. Thus
Hitchin's approach gives alternative proofs of Goldman's theorems and
the Uniformisation Theorem.



\subsection{The general Cayley correspondence}
\label{sec:cayley}

Hitchin \cite{hitchin:1992} generalised the construction of the map
$\Psi$ to a map
\begin{displaymath}
  \Psi\colon \bigoplus_{i} H^0(X,K^{d_i}) \to \mathcal{M}(X,G)
\end{displaymath}
whose image is again a connected component of the \emph{moduli space
$\mathcal{M}(X,G)$ of $G$-Higgs bundles} for any simple split real Lie
group $G$, nowadays known as a \emph{Hitchin component}.\footnote{In the case of classical matrix groups this means
  that $G$ is one of the groups $\SL(n,\RR)$, $\Sp(2n,\RR)$, $\SO(p,p)$ and
  $\SO(p,p+1)$.}
The domain of $\Psi$ is a direct sum of spaces of higher holomorphic
differentials on $X$; the integers $d_i$ are determined by the Lie
group $G$ (in fact they are the exponents of its Lie algebra).

Similar constructions of special connected components have later been
given for Hermitian groups $G$ of non-compact tube type, such as
$\SU(p,p)$ (see, for example,
\cite{bradlow-garcia-prada-gothen:2003,bradlow-garcia-prada-gothen:hss-higgs,biquard-garcia-rubio:2017}). In
this case the domain of the map $\Psi$ turns out to be a moduli space
$\mathcal{M}_{K^2}(X,G')$ of so-called $K^2$-twisted $G'$-Higgs
bundles, for a certain real Lie group $G'$ associated to $G$ (known as
its \emph{Cayley partner}).

Both Hitchin components and Cayley components are special because
they are not (as all other known components of the moduli space)
detected by standard topological invariants of the underlying bundles
and the Higgs fields satisfy a certain non-degeneracy condition.

Recently (see \cite{sopq-IM:2019,BCGGO:2021} and the recent survey
\cite{bradlow:2023}) both of these constructions have been unified and
generalised.  The class of Lie groups $G$ covered are characterised by
the fact that their Lie algebras admit a \emph{magical
  $\mathfrak{sl}_2$-triple}. This new Lie theoretic notion builds on
ideas of Hitchin~\cite{hitchin:1992} and generalises
that of a principal $\mathfrak{sl}_2$-triple introduced by Kostant.
Conjecturally the generalised Cayley components obtained by this
construction account for all ``special'' (in the sense of the previous
paragraph) connected components of
the moduli space and thus opens the door to a complete determination
of this important topological invariant.

One important piece of supporting evidence for this conjecture comes
from the area of \emph{Higher Teichmüller Theory}.  Higher Teichmüller
theory has developed in parallel with the Higgs bundle story just
described, and there has been a rich cross-fertilisation of ideas
between the two areas.  We cannot do justice to this fast-growing, rich and
important area of mathematics here but refer the interested reader to
\cite{guichard-wienhard:2018,wienhard:2018}
and references therein. Very briefly, a higher Teichmüller space is a
connected component of the moduli space of representations, which
consists exclusively of discrete and injective representations, like
the Fuchsian locus in the $\PSL(2,\RR)$-case. It turns out that the
generalised Cayley components are indeed higher Teichmüller spaces
\cite{BCGGO:2021,guichard-labourie-wienhard:2021}, and it is expected
that all higher Teichmüller spaces are thus obtained.


\providecommand{\bysame}{\leavevmode\hbox to3em{\hrulefill}\thinspace}

\end{multicols}

\end{document}